\newcommand{\ignore}[1]{}
\newcommand{\bb}{\mathbb}
\newcommand{\C}{\bb C}
\newcommand{\Z}{\bb Z}
\newcommand{\height}{\textrm{\rm H}}
\newcommand{\R}{\bb R}
\newcommand{\N}{\bb N}
\newcommand{\p}{\bb P}
\newcommand{\W}{\mathcal W}
\newcommand{\Q}{\mathbb Q}
\newcommand{\bx}{\mathbf{x}}
\newcommand{\bxp}{\mathbf{x}'}
\newcommand{\by}{\mathbf{y}}
\newcommand{\byp}{\mathbf{y}'}
\newcommand{\q}{\mathbf{q}}
\newcommand{\cW}{\mathcal{W}_{v}(\psi, k, n)}
\newcommand{\cqW}{\mathcal{W}_{p}(\psi, \Q, n)}
\newcommand{\cR}{\mathcal{R}}
\newcommand{\rar}{\rightarrow}
\newtheorem{theorem}{Theorem}
\newtheorem{Prop}[theorem]{Proposition}
\newtheorem*{lemma*}{Lemma}
\newtheorem*{theorem*}{Theorem}
\numberwithin{equation}{section}
\numberwithin{theorem}{section}
\begin{document}
\title[Diophantine approximation]{Projective metric number theory}
\author{Anish Ghosh and Alan Haynes}
\thanks{AH supported by EPSRC grant EP/J00149X/1. AG supported by EPSRC}
\address{School of Mathematics, University of East Anglia, Norwich, UK }
\email{a.ghosh@uea.ac.uk}
\address{School of Mathematics, University of Bristol, Bristol UK }
\email{alan.haynes@bristol.ac.uk}

\begin{abstract}
In this paper we consider the probabilistic theory of Diophantine approximation in projective space over a completion of $\Q$. Using the projective metric studied in \cite{BVV} we prove the analogue of Khintchine's Theorem in projective space. For finite places and in higher dimension, we are able to completely remove the condition of monotonicity and establish the analogue of the Duffin-Schaeffer conjecture.
\end{abstract}

\maketitle

%\tableofcontents

\section{Introduction}

The subject of \emph{metric Diophantine approximation} is concerned with estimating the size of sets with prescribed Diophantine properties. A foundational theorem is due to Khintchine. Let $\psi : \R_{+} \cup \{0\} \to \R_{+} \cup \{0\}$ be a decreasing function. Then the set of ``$\psi$-approximable numbers," namely those for which the inequality
$$ |x - p/q| < \psi(|q|) $$
\noindent holds for infinitely many $p, q \in \Z$, has zero or full measure according to whether
$$ \sum_{q = 1}^{\infty} q\psi(q) $$
\noindent converges or diverges. The result generalizes naturally to higher dimensions. Probably the most important open problem in classical metric Diophantine approximation is the Duffin-Schaeffer conjecture (\cite{DS}) which asks to what extent the monotonicity condition on $\psi$ can be relaxed. It is known by work of Pollington and Vaughan \cite{PV} that in dimensions greater than $1$, monotonicity is not essential, i.e. the higher dimensional Duffin-Schaeffer conjecture is true. The problem in dimension $1$ is more delicate and has been studied in several recent works (cf.  \cite{HPV}, and \cite{Haynes}).\\

The purpose of this paper is to investigate projective analogues of these theorems. First we establish the projective Duffin-Schaeffer conjecture in higher dimensions for finite places of $\Q$. Subsequently we show how to use existing results on the distribution of rational points of bounded height in projective space, coupled with ubiquitous systems, to establish a projective analogue of Khintchine's theorem in any dimension and for both finite and infinite places. The motivation for our work comes from \cite{CV} where K. Choi and J. Vaaler established a projective version of Dirichlet's theorem in Diophantine approximation for number fields.\\

Let $k$ be a number field and $k_v$ its completion at the place $v$. Let $\|~\|$ be an absolute value from $v$ which extends the Euclidean absolute value on $k_v$ if $v~|~\infty$ and the $p$-adic absolute value if $v | p$. If $d = [k : \Q]$ and $d_v = [k_v: \Q_v]$ are the global and local degrees of the extension then for $x \in k_v$ define the normalized absolute value
$$ |x|_v  := \| x \|_{v}^{d_v/d}.$$
\noindent The normalized absolute values then satisfy the usual product formula. Next for $n\in\N$ and $\bx \in k_{v}^{n}$ define
\begin{equation*} \|\bx\|_{p} := \left\{
\begin{array}{rl} \max\{\|x_i\|_v, 1 \leq i \leq n\} & \text{if } v\text{ lies over a finite prime},\\\\ \left(\sum_{i = 1}^{n} \|x_i\|_v^2 \right)^{1/2} &\text{if } v | \infty,
\end{array} \right. \end{equation*}
and let $|\bx|_v := \|\bx\|_{v}^{d_v/d}$.\\

Projective metric Diophantine approximation aims to quantify the density of $\p^{n-1}(k)$ in $\p^{n-1}(k_v)$. For this we need a metric and a height function. For non-zero vectors $\bx, \by \in k^{n}_v$ we define
\begin{equation}\label{def-metric}
\delta_{v}(\bx, \by) := \frac{|\bx \wedge \by|_v}{|\bx|_v |\by|_v}.
\end{equation}
Then $\delta_v$ defines a metric on $\p^{n-1}(k_v)$ which induces the usual quotient topology (\cite{Rumely}). For example it is easy to check for $k=\Q$ and $v=\infty$ that $\delta_{\infty}(\bx, \by)  = |\sin(\theta)|$ where $\theta$ is the angle between $\bx$ and $\by$. We define the height of a point $\bx\in \p^{n-1}(k)$ by
\begin{equation}\label{height-def}
\height(\bx) := \prod_{v}|\bx|_v,
\end{equation}
and we note that this is well defined over projective space because of the product formula. We are now ready to state Choi and Vaaler's projective version of Dirichlet's theorem (Theorem $1$ in \cite{CV}).
\begin{theorem}\label{Choi-Vaaler}
Let $\bx \in \p^{n-1}(k_v)$, let $\tau \in k_v$ with $|\tau|_v \geq 1$. Then there exists $\by \in \p^{n-1}(k)$ such that
\begin{enumerate}
\item $\height(\by) \leq c_{k}(n)|\tau|_{v}^{n-1}$,\text{ and}\\
\item $\delta_{v}(\bx, \by) \leq c_{k}(n)(|\tau|_v\height(\by))^{-1}$.
\end{enumerate}
\end{theorem}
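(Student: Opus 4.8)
The plan is to deduce the theorem from the adelic form of Minkowski's convex body theorem --- the geometry of numbers over the adele ring $\mathbb A_k$ of $k$, in the form developed by McFeat and by Bombieri and Vaaler. Since $\delta_v$, the height, and both inequalities depend on $\bx$ only through the $k_v$-line it spans, we may fix a representative with $|\bx|_v=1$; then $\delta_v(\bx,\by)=|\bx\wedge\by|_v/|\by|_v$, and $|\bx\wedge\by|_v$ measures, in the normalized absolute value, how far $\by$ is from the line $k_v\bx$. Thus the two inequalities to be proved amount to producing a nonzero $\by\in k^n$ which is simultaneously close to the line $k_v\bx$ in the $v$-adic sense and of controlled size; integrality of $\by$ at the other places will then force its height to be small.

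Put $T=|\tau|_v\ge 1$ and fix $\epsilon$ and $R$ with $\epsilon\asymp T^{-1}$ and $R\asymp T^{n-1}$, the implied constants depending only on $k$ and $n$ and chosen so that $\epsilon^{\,n-1}R$ equals a prescribed large constant. Consider the symmetric adelic body $C=\prod_w C_w\subset\mathbb A_k^n$, where $C_w=\cO_w^n$ at each finite place $w\neq v$, $C_w$ is the unit ball $\{\,\|\mathbf z\|_w\le 1\,\}$ at each archimedean place $w\neq v$, and
\[
C_v=\{\,\mathbf z\in k_v^n:\ |\mathbf z\wedge\bx|_v\le\epsilon\ \text{ and }\ |\mathbf z|_v\le R\,\}
\]
at $v$ itself. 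Here $\mathbf z\mapsto|\mathbf z\wedge\bx|_v$ is, up to the exponent $d_v/d$, a seminorm, so $C_v$ is an intersection of two symmetric convex bodies when $v\mid\infty$ and an $\cO_v$-lattice (after harmless rounding of $\epsilon$ and $R$ to values in $|k_v^\times|$) when $v$ is finite. Tracking the exponents $d_w/d$ in the normalized absolute values, one computes $\vol(C_v)\asymp\epsilon^{\,n-1}R$ --- the $n-1$ directions transverse to $k_v\bx$ carry ``radius'' $\epsilon$ and the one direction along $k_v\bx$ carries ``radius'' $R$ --- while $\prod_{w\neq v}\vol(C_w)$ is a positive constant depending only on $k$ and $n$. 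Hence for our choice of $\epsilon$ and $R$ the total volume $\vol(C)$ exceeds the threshold in the adelic Minkowski theorem, a multiple (depending only on $k$ and $n$) of the covolume of $k^n$ in $\mathbb A_k^n$; so $C$ contains a nonzero $\by\in k^n$, which, being nonzero, represents a point of $\p^{n-1}(k)$.

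It remains to read off the estimates. At every place $w\neq v$ the vector $\by$ lies in the unit ball of $k_w^n$, so $|\by|_w\le 1$ there; therefore $\height(\by)=|\by|_v\prod_{w\neq v}|\by|_w\le|\by|_v\le R\le c_k(n)\,T^{n-1}$, which is statement (1), and moreover $|\by|_v\ge\height(\by)$. For statement (2), using $|\bx|_v=1$ and $|\bx\wedge\by|_v\le\epsilon$ we obtain
\[
\delta_v(\bx,\by)=\frac{|\bx\wedge\by|_v}{|\by|_v}\le\frac{\epsilon}{|\by|_v}\le\frac{\epsilon}{\height(\by)}\le\frac{c_k(n)}{T\,\height(\by)},
\]
as required, after absorbing constants into $c_k(n)$.

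The step I expect to be the main obstacle is the volume bookkeeping: verifying that, for a consistent choice of Haar measures, $\vol(C_v)$ really scales like $\epsilon^{\,n-1}R$ and that, after multiplying by the convergent product of the local volumes at the remaining places, the result still dominates the adelic covolume. This is exactly where the exponent $n-1$ in (1) is forced, by the balance $\epsilon^{\,n-1}R\asymp 1$ together with $\epsilon\asymp T^{-1}$, and it is also where the dependence of $c_k(n)$ on the field $k$ --- through its discriminant and its archimedean places --- enters. A subsidiary technical point, present only when $v$ is non-archimedean, is to confirm that $C_v$ is an $\cO_v$-lattice of the stated volume, so that the adelic Minkowski theorem applies to $C$ as written.
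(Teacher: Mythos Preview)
The paper does not actually prove this statement: Theorem~\ref{Choi-Vaaler} is quoted verbatim as ``Theorem~1 in \cite{CV}'' and used as a black box in the ubiquity argument of \S\ref{ubiquity}. So there is no proof in the present paper to compare against.

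That said, your sketch is the right one, and it is essentially the argument Choi and Vaaler give in \cite{CV}: apply the adelic Minkowski theorem of Bombieri--Vaaler \cite{BV} to a product body which is the integer lattice at all finite places $w\ne v$, the unit ball at all archimedean places $w\ne v$, and at $v$ a ``tube'' of radius $\asymp|\tau|_v^{-1}$ about the line $k_v\bx$ intersected with a ball of radius $\asymp|\tau|_v^{\,n-1}$. The deduction of (1) and (2) from $\by\in C$ that you wrote down is correct, including the key inequality $|\by|_v\ge\height(\by)$ coming from $|\by|_w\le 1$ for $w\ne v$.

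Two remarks. First, your proof as written only yields the theorem with \emph{some} constant depending on $k$ and $n$; the specific constant $c_k(n)$ displayed in (\ref{constant}), with the discriminant factor $|\Delta_k|^{1/2d}$ and the archimedean ball-volume factors $r_v(n)$, is exactly what drops out of the volume bookkeeping you flag as the main obstacle, and recovering it requires doing that computation honestly rather than writing $\asymp$. Second, at a non-archimedean $v$ the set $C_v$ is genuinely an $\cO_v$-module once $\epsilon,R\in|k_v^\times|_v$, since $\mathbf z\mapsto|\mathbf z\wedge\bx|_v$ and $\mathbf z\mapsto|\mathbf z|_v$ are ultrametric; no convexity subtlety arises there.
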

\noindent Here
\begin{align}
c_{k}(n) = 2|\Delta_{k}|^{1/2d} \prod_{v \arrowvert \infty} r_{v}(n)^{d_v/d},\label{constant}
\end{align}
$\Delta_k$ is the discriminant of $k$, and
\begin{align*}
r_{v}(n) = \left\{
\begin{array}{rl}  \pi^{-1/2}\Gamma(\frac{n}{2} + 1)^{1/n} & \text{if } v \text{ is real},\\\\ (2\pi)^{-1/2}\Gamma(n + 1)^{1/2n} & \text{if } v \text{ is complex}.
\end{array} \right.
\end{align*}
A corollary of Theorem \ref{Choi-Vaaler}, is that for every $\bx \in \p^{n-1}(k_v) \backslash \p^{n-1}(k)$, there exist infinitely many distinct $\by \in \p^{n-1}(k)$ such that
$$\delta_v(\bx, \by) \leq c_{k}(n)^{n/(n-1)}\height(\by)^{-n/(n-1)}.$$\\

In order to state our results we will work with probability measures on $\p^{n-1}(k_v)$, originally defined and studied by Choi \cite{Choi}. First we specify natural measures $\beta_v^n$ on $k_v^n$. If $v$ is an infinite place then $\beta_v^n$ is the usual $n$-fold Lebesgue measure on $\R^n$ or $\C^n$, while if $v$ is a finite place then $\beta_v^n$ is the $n-$fold Haar measure normalized so that
$$ \beta_v(O_v) = \|\mathcal{D}_v\|_{v}^{d_v/2}, $$
where $O_v$ is the ring of integers of $k_v$ and $\mathcal{D}_v$ is the local different of $k$ at $v$. Next let $\phi :k_v^n\setminus \{{\bf 0}\}\rar\p^{n-1}(k_v)$ be the quotient map and define the $\sigma$-algebra $\mathcal{M}$ of measurable sets in $\p^{n-1}(k_v)$ to be the collection of sets $M\subseteq\p^{n-1}(k_v)$ such that $\phi^{-1}(M)$ lies in the $\sigma-$algebra of Borel sets in $k_v^n$. Then define measures $\mu_v$ on $(\p^{n-1}(k_v),\mathcal{M})$ by
\[\mu_v(M)=\frac{\beta_v^n\left(\phi^{-1}(M)\cap B({\bf 0},1)\right)}{\beta_v^n\left(B({\bf 0},1)\right)}.\]\\
%It follows easily from this definition and the Borel-Cantelli Lemma that for any $\epsilon > 0$, the set of $\bx \in \p^{n-1}(k_v)$ for which there exist infinitely many distinct $\by \in \p^{n-1}(k)$ such that
%$$\delta_v(\bx, \by) \leq c_{k}(n)^{n/(n-1)}\height(\by)^{-n/(n-1) - \epsilon}$$
%\noindent has $\mu_v$ measure $0$.

To simplify the exposition from here on we will specialize to the case when $k = \Q$. The case of a general number field is still interesting but more technical, and we leave its treatment to a later paper. Given $\psi : \R_{+} \cup \{0\} \to \R_{+} \cup \{0\}$ let $\cW$ be the set of $\bx \in \p^{n-1}(\Q_v)$ for which there exist infinitely many $\by \in \p^{n-1}(\Q)$ such that
$$ \delta_{v}(\bx, \by) \leq \psi(\height(\by)).$$

%\noindent Let $i \in \Z$ and denote by $\cyW$, the set of $\bx \in \p^{n-1}(\Q_v)$ for which there exist $\by \in \p^{n-1}(\Q)$ with $2^i \leq \height(\by) < 2^{i+1}$ satisfying
%$$ \delta_{v}(\bx, \by) \leq \psi(\height(\by)).$$

%\noindent We observe that:
%\begin{enumerate}
%\item The volume of the sets $\cyW$ can easily be estimated (see \cite{Choi} for a very precise formula) and in particular
%\begin{equation}\label{volume}
%\mu_{v}(\cyW) \asymp \psi(\height(\by))^{(n-1)}.
%\end{equation}
%\item By results of Schanuel \cite{Schanuel} and Choi \cite{Choi},
%\begin{equation}\label{distribution}
%\#\{\gamma \in \p^{n-1}(\Q)~:~\height(\by) \leq T\} = cT^{n} + O(T^{(n - 1)} + T\log T).
%\end{equation}
%\end{enumerate}

%\noindent This gives that
%$$\sum_{i = 1}^{\infty} \sum_{\substack{\by \in \p^{n-1}(\Q) \\ 2^i \leq \height(i) < 2^{i+1} }}
%\mu_{v}(\cyW) \ll \sum_{q = 1}^{\infty} q^{(n - 1)}\psi(q)^{(n - 1)}.$$
\noindent Then it is a straightforward consequence of the Borel-Cantelli lemma that $\mu_{p}(\cW) = 0$ whenever
\begin{equation}\label{sum}
\sum_{q=1}^{\infty} q^{n - 1}\psi(q)^{(n - 1)}
\end{equation}
\noindent converges. In particular this implies that the power $-n/(n-1)$ in Theorem \ref{Choi-Vaaler} is generically best possible. Our first result establishes the projective $p$-adic version of the Duffin-Schaeffer conjecture in all dimensions greater than $1$.

\begin{theorem}\label{thm2}
Assume that $p$ is a finite place, that $n > 2$, and that $\psi (q)=0$ whenever $p|q$. Then $\cqW$ has full measure whenever (\ref{sum}) diverges.
\end{theorem}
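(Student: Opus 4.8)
First, both hypotheses are genuinely necessary, and this guides the proof. If $\psi$ is supported on multiples of $p^{a}$ (for some fixed $a\ge 1$) with values in $(0,p^{-a})$, then any $\by\in\p^{n-1}(\Q)$ of height $q$ in the support, represented by a primitive integer vector, has a coordinate $\pm q$ divisible by $p^{a}$, so $\by$ --- and hence the whole ball $\{\bx:\delta_p(\bx,\by)\le\psi(q)\}$, which has radius $<p^{-a}$ --- lies in the $p^{-a}$-neighbourhood of a coordinate hyperplane of $\p^{n-1}$; thus $\cqW$ lies in the $p^{-a}$-neighbourhood of the union of the $n$ coordinate hyperplanes, a set of $\mu_p$-measure $\ll n/p^{a}$, while \eqref{sum} can nevertheless be made to diverge, so for $a$ large the conclusion fails without the assumption $\psi(q)=0$ for $p\mid q$. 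Similarly $n>2$ cannot be weakened to $n\ge 2$: there are only $\asymp\varphi(q)$ points of $\p^{1}(\Q)$ of height $q$, so for $n=2$ the correct divergence condition would be $\sum\varphi(q)\psi(q)=\infty$ --- the classical one-dimensional obstruction --- whereas for $n\ge 3$ the Euler products $\prod_{\ell\mid q}(1-\ell^{-(n-1)})$ are bounded below and this difficulty disappears.

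The plan is then a second-moment (quasi-independence) argument. I begin with two harmless reductions: since $|\bx\wedge\by|_v\le|\bx|_v|\by|_v$ at a finite place one has $\delta_p\le 1$ always, so either $\psi(q)\ge 1$ for infinitely many $q$ and $\cqW=\p^{n-1}(\Q_p)$ trivially, or we may assume $\psi(q)<1$ for all $q$; and since $\delta_p$ takes values in $\{p^{-m}:m\ge 0\}\cup\{0\}$, we may replace each $\psi(q)$ by the largest power $p^{-m}\le\psi(q)$, changing neither $\cqW$ nor the order of magnitude of \eqref{sum}. Representing $\by\in\p^{n-1}(\Q)$ by a primitive integer vector, unique up to sign, so that $\height(\by)\asymp\|\by\|_{\infty}$, put
\[
A_{\by}:=\{\bx\in\p^{n-1}(\Q_p):\delta_p(\bx,\by)\le\psi(\height(\by))\},
\]
so that $\cqW=\limsup_{\by}A_{\by}$ as $\height(\by)\to\infty$. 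A computation in an affine chart, in which $\mu_p$ is a normalized Haar measure on $\Z_p^{\,n-1}$, gives $\mu_p(A_{\by})\asymp\psi(\height(\by))^{n-1}$; with $\#\{\by\in\p^{n-1}(\Q):\height(\by)=q\}\asymp q^{n-1}$ (valid for $n\ge 3$), this makes the first moment
\[
\Psi(Q):=\sum_{\height(\by)\le Q}\mu_p(A_{\by})\ \asymp\ \sum_{q\le Q}q^{\,n-1}\psi(q)^{\,n-1}
\]
diverge by hypothesis.

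The heart of the matter is the pairwise estimate. At a finite place $\delta_p$ is an ultrametric on $\p^{n-1}(\Q_p)$ --- one verifies directly from \eqref{def-metric} and the ultrametric inequality on $\Q_p$ that $\delta_p(\bx,\bw)\le\max(\delta_p(\bx,\by),\delta_p(\by,\bw))$ --- so for distinct $\by,\byp$ the balls $A_{\by},A_{\byp}$ are either disjoint (exactly when $\delta_p(\by,\byp)>\max(\psi(\height(\by)),\psi(\height(\byp)))$) or else the smaller is contained in the larger. Hence $\mu_p(A_{\by}\cap A_{\byp})\le\min(\mu_p(A_{\by}),\mu_p(A_{\byp}))$, and the intersection is non-empty only when $\by$ and $\byp$ are $p$-adically close, which reduces the second moment to a lattice-point count: for primitive $\by,\byp$ one has $\delta_p(\by,\byp)=\|\by\wedge\byp\|_p$, so $\delta_p(\by,\byp)\le p^{-t}$ forces $\byp$ into the lattice $\Z\by+p^{t}\Z^{n}$ of index $p^{t(n-1)}$, and what is needed is an estimate of the shape
\[
\#\{\byp\in\p^{n-1}(\Q):\height(\byp)=q',\ \delta_p(\by,\byp)\le p^{-t}\}\ \ll\ \frac{(q')^{\,n-1}}{p^{t(n-1)}}+(\text{boundary term}).
\]
Substituting this back and summing over heights (using that overlapping balls are nested) should give $\sum_{\by\ne\byp}\mu_p(A_{\by}\cap A_{\byp})\ll\Psi(Q)^{2}$, the sum over distinct $\by,\byp$ of height at most $Q$ --- i.e.\ quasi-independence on average of the family $\{A_{\by}\}$.

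I expect the main obstacle to be this last inequality. The pairs whose balls are \emph{nested} are few but each contributes the full measure of the smaller ball, and controlling their total contribution --- equivalently, bounding the boundary term above and then carrying out the summation over heights without monotonicity of $\psi$ --- is precisely where both hypotheses are spent: $n>2$ furnishes the room that makes the count of $p$-adically proximate rational points genuinely smaller than the diagonal (the analogue of the role of $n\ge 2$ in Pollington--Vaughan's treatment of the affine conjecture), and $\psi(q)=0$ for $p\mid q$ restores the equidistribution modulo $p$ that $p\mid q$ would otherwise spoil, so that nested pairs cannot accumulate. Granting the quasi-independence bound, the divergence form of the Borel--Cantelli lemma gives $\mu_p(\cqW)=\mu_p(\limsup_{\by}A_{\by})>0$, and a zero--one law then promotes this to full measure: $\cqW$ agrees, up to a $\mu_p$-null set, with the tail set $\bigcap_{N}\bigcup_{\height(\by)>N}A_{\by}$, and the self-similar $p$-adic ball structure of the $A_{\by}$ together with the Lebesgue density theorem forces any such set of positive measure to have density $1$ at $\mu_p$-almost every point.
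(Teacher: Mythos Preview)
Your overall architecture --- second-moment quasi-independence to get positive measure, then a zero--one law to upgrade to full measure --- is exactly the paper's strategy. The difference is in how the crucial overlap estimate is executed, and this is precisely the step you leave open.

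You index by individual rational points $\by$ and arrive at a lattice count with an unresolved ``boundary term''; you then say the summation ``should give'' quasi-independence. The paper closes this gap with two devices you do not mention. First, it reduces to the case $\psi(q)\le q^{-1}$ for all $q$ (replacing $\psi$ by $\min(\psi(q),q^{-1})$ does not affect divergence of \eqref{sum} and only shrinks $\cqW$). Second, it groups the balls by height and by which coordinate achieves the maximum, setting
\[
A_{q,n}(\psi)=\bigcup_{\substack{\by\in\Z^n_{\mathrm{vis}}\\ \height(\by)=y_n=q}}B_\delta(\phi(\by),\psi(q)),
\]
and proves quasi-independence for the family $\{A_{q,n}\}_q$ rather than for the individual $A_\by$. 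The payoff is that (i) for a fixed $q$ with $\psi(q)\le q^{-1}$ the balls in $A_{q,n}$ are \emph{disjoint}, so the diagonal contribution is exactly the first moment; and (ii) for distinct heights $q\ne r$ with $\psi(r)=p^{-k}$, the pair count reduces to counting $(\bxp,\byp)\in\Z^{n-1}\times\Z^{n-1}$ with $\|\bxp\|_\infty\le q$, $\|\byp\|_\infty\le r$ and $q\byp\equiv r\bxp\pmod{p^k}$. Primitivity excludes $\bxp=\byp=0$, and when $p^k>2qr$ the congruence forces $q\byp=r\bxp$, hence $q=r$, a contradiction --- so the count is zero and the boundary term simply vanishes. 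Thus $S(q,r)\ll(qr/p^k)^{n-1}$ uniformly, giving $\mu_p(A_{q,n}\cap A_{r,n})\ll\mu_p(A_{q,n})\mu_p(A_{r,n})$ with no residual error to sum. Your formulation, indexing by $\by$ and without the $\psi(q)\le q^{-1}$ reduction, does not make this cancellation visible.

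On the zero--one law: your appeal to ``self-similar $p$-adic ball structure'' and density is too quick. As you yourself observe at the outset, the conclusion is \emph{false} without the hypothesis $\psi(q)=0$ for $p\mid q$, and indeed the paper exhibits a $\psi$ for which $0<\mu_p(\cqW)<1$. So any zero--one argument must use this hypothesis in an essential way. The paper does this by passing, via the bijections $\eta_i:\Z_p^{n-1}\to E_i$, to a classical limsup set in $\Z_p^{n-1}$ (the hypothesis ensures $\W_{p,i}(\psi)\subseteq E_i$), and then invoking a known $p$-adic zero--one law in that affine setting. Your density sketch would need to be made to reflect this reduction.
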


\noindent Now we allow arbitrary primes and dimensions at the cost of monotonicity. Our second result is therefore the complete projective version of Khintchine's theorem.

\begin{theorem}\label{thm1}
Assume that $\psi$ is decreasing and let $p$ be a (finite or infinite) place of $\Q$. Then $\mu_{p}(\cqW)=1$ whenever (\ref{sum})
diverges.
\end{theorem}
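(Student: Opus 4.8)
Only the divergence direction needs an argument, since the convergence case is the Borel--Cantelli remark preceding \eqref{sum}; so assume $\sum_{q}q^{n-1}\psi(q)^{n-1}=\infty$. The case $n=1$ is vacuous, so take $n\ge2$. When $p$ is finite and $n>2$ the statement follows from Theorem~\ref{thm2}: let $\widetilde\psi(q)=\psi(q)$ if $p\nmid q$ and $\widetilde\psi(q)=0$ otherwise; since $\psi$ is decreasing one has, for each $k\ge1$ and $1\le j\le p-1$,
\[
(kp+j)^{n-1}\psi(kp+j)^{n-1}\ \ge\ 2^{-(n-1)}\,\bigl((k+1)p\bigr)^{n-1}\psi\bigl((k+1)p\bigr)^{n-1},
\]
and summing over $j$ and $k$ shows that $\sum_q q^{n-1}\widetilde\psi(q)^{n-1}$ diverges whenever $\sum_q q^{n-1}\psi(q)^{n-1}$ does. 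As $\widetilde\psi$ vanishes on multiples of $p$, Theorem~\ref{thm2} gives $\mu_p(\mathcal W_p(\widetilde\psi,\Q,n))=1$, and $\cqW\supseteq\mathcal W_p(\widetilde\psi,\Q,n)$ finishes this case. It remains to handle $n=2$ for any place $p$, and infinite $p$ for any $n\ge2$, and here I would use the theory of ubiquitous systems; the monotonicity of $\psi$ enters only at this stage.

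\noindent\textbf{The ubiquity set-up.} First, $(\p^{n-1}(\Q_p),\delta_p,\mu_p)$ is an Ahlfors $(n-1)$-regular metric measure space, i.e.\ $\mu_p(B(\bx,r))\asymp r^{n-1}$: for infinite $p$ because $\p^{n-1}(\R)$ with the angular metric is a smooth compact manifold, and for finite $p$ from Choi's explicit description of $\mu_p$ \cite{Choi}. The resonant points will be the $\by\in\p^{n-1}(\Q)$, weighted by $\height(\by)$, and the ubiquity function will be $\rho(Q)=c\,Q^{-n/(n-1)}$, the exponent appearing in the corollary to Theorem~\ref{Choi-Vaaler}; after replacing $\psi$ by $\min(\psi,\rho)$ if necessary (which can only shrink the relevant $\limsup$ set and, because $\psi$ is decreasing, preserves the divergence of \eqref{sum}) we may assume $\psi\le\rho$. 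Two further ingredients are needed. The first is a separation estimate: distinct $\by_1,\by_2\in\p^{n-1}(\Q)$ satisfy $\delta_p(\by_1,\by_2)\ge(\height(\by_1)\height(\by_2))^{-1}$, which follows by applying the product formula to the nonzero point $\by_1\wedge\by_2$ of $\p^{\binom{n}{2}-1}(\Q)$ together with the inequality $\|\bx\wedge\by\|_w\le\|\bx\|_w\|\by\|_w$, valid at every place $w$; in particular rational points of height $\le Q$ are $Q^{-2}$-separated. The second is an asymptotic for the number of $\by\in\p^{n-1}(\Q)$ of height $\le Q$ lying in a ball of $\p^{n-1}(\Q_p)$: Schanuel's count refined by congruence conditions when $p$ is finite, and the equidistribution of rational points of bounded height in spherical caps when $p$ is infinite.

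\noindent\textbf{Local ubiquity and conclusion.} The main step is to combine Theorem~\ref{Choi-Vaaler} with these two ingredients to establish \emph{local ubiquity relative to $\rho$}: there is $\kappa>0$ such that
\[
\mu_p\Big(B\cap\!\!\bigcup_{\substack{\by\in\p^{n-1}(\Q)\\ \height(\by)\le Q}}\!\! B\bigl(\by,\rho(Q)\bigr)\Big)\ \ge\ \kappa\,\mu_p(B)
\]
for every ball $B\subset\p^{n-1}(\Q_p)$ and all sufficiently large $Q$. Theorem~\ref{Choi-Vaaler} already produces, for each $\bx$, a rational $\by$ of height $\le Q$ with $\delta_p(\bx,\by)\ll\height(\by)^{-n/(n-1)}$; the counting asymptotic is then used to show that the set of $\bx\in B$ served only by approximants of atypically small height has measure at most $(1-\kappa)\mu_p(B)$, and, in tandem with the separation estimate, to bound the multiplicity of the cover by the balls $B(\by,\rho(Q))$. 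For $n=2$ this reduces to the (classical, resp.\ $p$-adic) statement that rational points of bounded height in $\p^1$ are evenly distributed at the critical scale $Q^{-2}$, which there coincides with the separation scale. Granting local ubiquity, the abstract ubiquity theorem for zero-dimensional resonant sets in an Ahlfors $(n-1)$-regular space (in the form of Beresnevich, Dickinson and Velani) yields $\mu_p(\cqW)=1$ provided $\psi$ is monotone and $\sum_k\bigl(\psi(2^k)/\rho(2^k)\bigr)^{n-1}=\infty$; by Cauchy condensation, valid since $\psi$ is decreasing, this last series is comparable to $\sum_q q^{n-1}\psi(q)^{n-1}$, which is our hypothesis.

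\noindent\textbf{The main obstacle.} The crux is establishing local ubiquity at the sharp scale $\rho(Q)\asymp Q^{-n/(n-1)}$. When $n>2$ one has $Q^{-2}<Q^{-n/(n-1)}$, so the separation estimate does \emph{not} force the balls $B(\by,\rho(Q))$ to be disjoint: as many as $\asymp Q^{\,n-2}$ rational points of height $\le Q$ can fall into a single such ball, and one really needs the distribution results for rational points of bounded height to be precise enough to confine this clustering to a negligible portion of $\p^{n-1}(\Q_p)$. Monotonicity of $\psi$ is exactly what permits one to verify ubiquity only along the dyadic scales $Q=2^k$; it is this restriction of the required Duffin--Schaeffer-type estimate to dyadic scales that cannot be dispensed with by the present method.
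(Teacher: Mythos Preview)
Your plan is correct and, at its core, follows the paper's own route: set up the rational points of $\p^{n-1}(\Q)$ as a ubiquitous system in $(\p^{n-1}(\Q_p),\delta_p,\mu_p)$ with $\rho(Q)\asymp Q^{-n/(n-1)}$, verify local ubiquity using the projective Dirichlet theorem (Theorem~\ref{Choi-Vaaler}) together with the Schanuel/Choi counts, and then invoke the Beresnevich--Dickinson--Velani machinery along dyadic scales (where the monotonicity of $\psi$ enters via Cauchy condensation).

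Two points where you diverge from, and over-complicate, the paper's argument. First, the paper does \emph{not} split off the case of finite $p$ and $n>2$ via Theorem~\ref{thm2}; ubiquity handles all places and all $n\ge 2$ uniformly in one stroke. Your reduction is perfectly valid, just unnecessary. Second, and more importantly, your ``main obstacle'' is not an obstacle, and the separation estimate $\delta_p(\by_1,\by_2)\ge(\height(\by_1)\height(\by_2))^{-1}$ plays no role. The paper's verification of local ubiquity avoids any multiplicity or overlap analysis entirely: Theorem~\ref{Choi-Vaaler} covers every $\bx\in I$ by a ball $B(\by,c/(Q\,\height(\by)))$ with $\height(\by)\le cQ^{n-1}$; one then \emph{upper}-bounds the measure of $I$ covered by the points with $\height(\by)\le Q^{n-1}/2$ by the crude sum $\sum_{\by}\mu_p(B(\by,c/(Q\,\height(\by))))$, slices this sum into dyadic height ranges, and uses the Schanuel/Choi count to see it totals at most $(1-2^{-n})\mu_p(I)$. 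The complement, namely the points covered by $\by$ with $\height(\by)$ in the top dyadic range, therefore has measure at least $2^{-n}\mu_p(I)$, which is exactly local ubiquity at scale $\rho(Q^{n-1})$. No control on clustering is needed for this lower bound. (Note also that Choi--Vaaler gives $\delta_p(\bx,\by)\ll (Q\,\height(\by))^{-1}$, not $\height(\by)^{-n/(n-1)}$; the dependence on both $Q$ and $\height(\by)$ is what makes the dyadic subtraction work.)
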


\noindent We note that it is not difficult to show that the monotonicity assumption in Theorem \ref{thm1} can be removed when $n=2$ if and only if the Duffin-Schaeffer conjecture is true (see \cite{DS}, \cite{HPV}, and \cite{Haynes} for more details concerning this conjecture). We will demonstrate in \S\ref{0-1law subsec} that the condition that $\psi(q)=0$ whenever $p|q$ turns out to be a natural one, since without it there is not even a zero-one law in general. In other words without this condition it is possible to choose $\psi$ so that $0<\mu_p(\W_p(\psi))<1.$ Our method could easily be extended to deal with this more general case without introducing any new ideas, but for simplicity and elegance of the proofs we impose the extra condition.\\

\noindent Finally we remark that the version of Theorem \ref{thm2} for infinite places essentially follows from Gallagher's proof of \cite[Theorem 1]{Gallagher2}, albeit with some modifications. Also Theorems \ref{thm2} and \ref{thm1}, when combined with the Mass Transference Principle of Beresnevich and Velani \cite{BVel}, yield Hausdorff measure and dimension analogues of Khintchine's theorem. Since the application of mass transference is for the most part straightforward we omit the details. The interested reader can see \cite[Section 6]{Haynes} for an example of how the Hausdorff dimension arguments proceed.

\subsection*{Acknowledgements} AG thanks the ESI, Vienna for hospitality. AH thanks Simon Kristensen for helpful conversations concerning the proof of Theorem \ref{0-1law thm}.

\section{Projective Duffin-Schaeffer Conjecture}\label{0-1law sec}
\subsection{Basic setup}\label{p-adic setup subsec}
First we demonstrate the proof of Theorem \ref{thm2}. If $\psi(q)\ge 1$ for infinitely many $q$ then the statement of the theorem is trivial to verify, since $\delta_p(\bx, \by)\le 1$ for all $\bx,\by\in\p^{n-1}(\Q_p)$. Therefore by a straightforward application of the Borel-Cantelli lemma we may restrict our attention to the situation when $\psi$ takes values only in the set $\{0\}\cup\{p^{-k}:k\in\N\}$.

For $1\le i\le n$ define $E_i\subseteq\p^{n-1}(\Q_p)$ by
\[E_i:=\phi\left(\left\{\bx\in\Z_p^n~:~|x_i|_p\ge |x_j|_p \text{ for } 1\le j\le n\right\}\right),\]
and for each $q\in\N$ define $A_{q,i}(\psi)\subseteq\p^{n-1}(\Q_p)$ by
\begin{equation}\label{A_q,i def}
A_{q,i}(\psi):=\bigcup_{\substack{\by\in\Z^n_{\mathrm{vis}}\\\height (\by)=y_i=q}}B_\delta(\phi(\by),\psi (q)),
\end{equation}
where
\[\Z_\mathrm{vis}^n=\{\by\in\Z^n:\gcd (y_1,\ldots ,y_n)=1\}.\]
If we let
\[\W_{p,i}(\psi):=\limsup_{q\rar\infty}A_{q,i}(\psi)\]
then we have that
\begin{equation}\label{limsup union eqn}
\W_p(\psi)=\bigcup_{i=1}^n\W_{p,i}(\psi).
\end{equation}
A basic observation which is useful to us is the following characterization of balls in $\p^{n-1}(\Q_p)$.
\begin{Prop}\label{ball prop}
For any $k\in\N$ and $\by\in\Z_p^n$ with $|\by|_p=1,$
\begin{equation}\label{ball eqn1}
B_\delta(\phi(\by) ,p^{-k})=\phi\left(\{\bx\in\Z_p^n:|\bx-\by|_p\le p^{-k}\}\right).
\end{equation}
\end{Prop}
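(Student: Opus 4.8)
The plan is to prove the two inclusions separately, and in each direction to replace a point of $\p^{n-1}(\Q_p)$ by a \emph{primitive} lift in $\Z_p^n$, i.e.\ one with $|\,\cdot\,|_p=1$, after which everything reduces to bilinearity of the wedge product and the ultrametric inequality. The facts to keep in mind are that over $\Q$ at a finite place $|\bx|_p=\max_i|x_i|_p$, that $|\bx\wedge\by|_p=\max_{i<j}|x_iy_j-x_jy_i|_p$, that $|\bv\wedge\bw|_p\le|\bv|_p|\bw|_p$, and that since $|\by|_p=1$ by hypothesis we have $\delta_p(\bx,\by)=|\bx\wedge\by|_p$ for any primitive $\bx\in\Z_p^n$.

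For the inclusion ``$\supseteq$'', suppose $\bx\in\Z_p^n$ with $|\bx-\by|_p\le p^{-k}$. Then $|\bx|_p=1$ because $|\by|_p=1>p^{-k}$, and, using $\by\wedge\by=0$, $\bx\wedge\by=(\bx-\by)\wedge\by$, so $|\bx\wedge\by|_p\le|\bx-\by|_p|\by|_p\le p^{-k}$ and hence $\delta_p(\bx,\by)\le p^{-k}$. For the reverse inclusion, take a point of $B_\delta(\phi(\by),p^{-k})$ and a primitive lift $\bx\in\Z_p^n$, so that $|\bx\wedge\by|_p\le p^{-k}$, and fix $m$ with $|y_m|_p=1$. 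Multiplying the $(i,m)$-coordinate of $\bx\wedge\by$ by $y_m^{-1}$ gives $|x_i-x_my_iy_m^{-1}|_p\le p^{-k}$ for every $i$; applying this with an index $\ell$ for which $|x_\ell|_p=1$ then shows $|x_m|_p=1$. Then $\bx':=(y_m/x_m)\bx$ is a primitive lift of the same projective point with $x_m'=y_m$ and $|x_i'-y_i|_p=|x_i-x_my_iy_m^{-1}|_p\le p^{-k}$ for all $i$, i.e.\ $|\bx'-\by|_p\le p^{-k}$, which is what we want.

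I expect the one step requiring a little care to be the transfer of the unit coordinate, namely the deduction that $|y_m|_p=1$ forces the $m$-th coordinate of a primitive lift $\bx$ to be a unit too: this is exactly what makes the rescaling $\bx\mapsto(y_m/x_m)\bx$ legitimate (it keeps $\bx'$ in $\Z_p^n$, indeed primitive) and aligns $\bx'$ with $\by$ in the $m$-th place. Everything else is routine non-archimedean bookkeeping.
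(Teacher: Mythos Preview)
Your proof is correct and follows essentially the same route as the paper's: both directions are handled by passing to primitive lifts, fixing a coordinate $m$ where $|y_m|_p=1$, showing the corresponding coordinate of the lift $\bx$ is also a unit, and then rescaling by $y_m x_m^{-1}$ to align that coordinate before reading off $|\bx'-\by|_p\le p^{-k}$ from the Pl\"ucker coordinates. The only cosmetic differences are that you phrase the ``$\supseteq$'' step via $\bx\wedge\by=(\bx-\by)\wedge\by$ rather than coordinatewise, and you deduce $|x_m|_p=1$ by exhibiting an index $\ell$ with $|x_\ell|_p=1$, whereas the paper argues by contradiction that $p\mid x_m$ would force $p\mid x_j$ for all $j$; these are equivalent.
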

\begin{proof}
First suppose that $\bx\in\Z_p^n$ satisfies $|\bx-\by|_p\le p^{-k}$. Then it follows that $|\bx|_p=1$ and
    \[\delta_p(\phi(\bx) ,\phi(\by))=\max_{1\le j<j'\le n}|x_jy_{j'}-x_{j'}y_j|_p\le p^{-k}.\]
For the other direction suppose that $\bxp\in B_\delta(\phi(\by) ,p^{-k})$ and choose $\bx\in\phi^{-1}(\bxp)$ with $|\bx|_p=1$. Since $|\by |_p=1$ we can choose $1\le i\le n$ so that $|y_i|_p=1$. We cannot have that $x_i=0\mod p$ since (\ref{ball eqn1}) would then imply that
    \[x_j=0\mod p~\text{for all }~1\le j\le n,\]
    contradicting our assumption that $|\bx|_p=1.$ Therefore, by multiplying $\bx$ by $y_ix_i^{-1}$, we can assume that $\bx\in\phi^{-1}(\bxp)$ has been chosen so that $|\bx|_p=1$ and $x_i=y_i$. Then (\ref{ball eqn1}) implies that $|\bx-\by|_p\le p^{-k}.$
\end{proof}
An immediate consequence of this is that if $k\in\N,$ $\by\in\Z_p^n,$ and $|\by|_p=|y_i|_p=1$ then $B_\delta(\phi(\by),p^{-k})\subseteq E_i.$ Therefore, under the assumption that $\psi(q)=0$ whenever $p|q$, we have for each $i$ and $q$ that $A_{q,i}(\psi)\subseteq E_i$ and thus $\W_{p,i}(\psi)\subseteq E_i$.

\subsection{Zero-one law}\label{0-1law subsec}
First we will show that there is not a zero-one law without the assumption that $\psi (q)=0$ whenever $p|q$.
\begin{Prop} For the function
\[\psi (q)=
\begin{cases}
    p^{-1}&\text{if}~p|q,\\
    0&\text{otherwise},
\end{cases}    \]
we have that
\[\W_p(\psi)=\phi\left(\left\{\bx\in\Z_p^n:|\bx|_p=1\text{ and } \min_{1\le i\le n}|x_i|_p\le p^{-1}\right\}\right).\]
Therefore in this case
\[\mu_p(\W_p(\psi))=1-\frac{(p-1)^n}{p^n-1}.\]
\end{Prop}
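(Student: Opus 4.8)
The plan is to prove the displayed identity by two inclusions and then read off the measure. Recall from \S\ref{p-adic setup subsec} that points of $\p^{n-1}(\Q)$ are represented by visible integer vectors $\by\in\Z^n_{\mathrm{vis}}$, that $\height(\phi(\by))=\max_i|y_i|$ for such a vector, and that $|\by|_p=1$. I would use Proposition \ref{ball prop} in the equivalent form: for $\by\in\Z^n_{\mathrm{vis}}$ and $\bx\in\Z_p^n$ with $|\bx|_p=1$, one has $\delta_p(\phi(\bx),\phi(\by))\le p^{-1}$ if and only if $\lambda\bx\equiv\by\pmod p$ for some $\lambda\in\Z_p^\times$, i.e.\ $\bx$ and $\by$ reduce to the same point of $\p^{n-1}(\mathbb{F}_p)$. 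Note also that any $\by$ with $p\nmid\height(\by)$ has $\psi(\height(\by))=0$, and so can lie within $\psi(\height(\by))$ of $\phi(\bx)$ only if $\phi(\bx)=\phi(\by)$; such $\by$ contribute at most one point and are irrelevant to membership in $\W_p(\psi)$. Hence $\phi(\bx)\in\W_p(\psi)$ exactly when infinitely many $\by\in\Z^n_{\mathrm{vis}}$ with $p\mid\height(\by)$ reduce to $\phi(\bx)$ modulo $p$.

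For the inclusion $\supseteq$, I would take $\bx\in\Z_p^n$ with $|\bx|_p=1$ and $\min_i|x_i|_p\le p^{-1}$, relabel so that $|x_1|_p=1$ and $|x_n|_p\le p^{-1}$, let $y_i^0\in\{0,\dots,p-1\}$ reduce to $x_i$ for $1\le i\le n-1$ (not all zero, since $y_1^0\neq0$), and put $g=\gcd(y_1^0,\dots,y_{n-1}^0)$, which is prime to $p$. For every large $m$ with $\gcd(m,g)=1$ — of which there are infinitely many — the vector $\by=(y_1^0,\dots,y_{n-1}^0,mp)$ is visible, has height $mp$ divisible by $p$, and satisfies $\by\equiv\bx\pmod p$, hence $\delta_p(\phi(\bx),\phi(\by))\le p^{-1}=\psi(\height(\by))$ by Proposition \ref{ball prop}; distinct $m$ give distinct projective points (the first $n-1$ coordinates are fixed and not all zero), so $\phi(\bx)\in\W_p(\psi)$. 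For the reverse inclusion, given $\phi(\bx)\in\W_p(\psi)$ with $\bx\in\Z_p^n$, $|\bx|_p=1$, pick (using the opening remark) a $\by\in\Z^n_{\mathrm{vis}}$ with $p\mid\height(\by)$ and $\delta_p(\phi(\bx),\phi(\by))\le p^{-1}$, and let $j$ realize $|y_j|=\height(\by)$. Then $y_j=\pm\height(\by)$ is divisible by $p$, so $y_j\equiv0\pmod p$; and by Proposition \ref{ball prop} there is $\lambda\in\Z_p^\times$ with $\lambda\bx\equiv\by\pmod p$, so $\lambda x_j\equiv0\pmod p$, giving $|x_j|_p\le p^{-1}$ and hence $\min_i|x_i|_p\le p^{-1}$. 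The one genuinely delicate point — and the step I would be most careful about — is this construction for $\supseteq$: one must secure visibility of $\by$ together with $p\mid\height(\by)$, which is exactly what the shape $\by=(y_1^0,\dots,y_{n-1}^0,mp)$ with $\gcd(m,g)=1$ accomplishes; everything else is a direct application of Proposition \ref{ball prop} plus the elementary fact that if $p$ divides the sup-norm of a vector then a coordinate of maximal modulus vanishes modulo $p$.

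Finally, for the measure, since $\beta_p(\Z_p)=\|\mathcal{D}_p\|_p^{1/2}=1$ we have $\beta_p^n(\Z_p^n)=1$, so $\mu_p(\W_p(\psi))=\beta_p^n(\phi^{-1}(\W_p(\psi))\cap\Z_p^n)$. I would stratify $\Z_p^n\setminus\{\mathbf{0}\}$ by $v=\min_i v_p(x_i)$; the stratum $\{v=v_0\}$ equals $p^{v_0}\{\mathbf{u}\in\Z_p^n:|\mathbf{u}|_p=1\}$, of measure $p^{-nv_0}(1-p^{-n})$, and a normalized point of it lies in $\phi^{-1}(\W_p(\psi))$ precisely when its reduction in $\mathbb{F}_p^n\setminus\{0\}$ has a zero coordinate. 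There are $(p^n-1)-(p-1)^n$ such residues, each filling a subset of $\{|\mathbf{u}|_p=1\}$ of measure $p^{-n}$, so summing the geometric series in $v_0$ gives
\[
\mu_p(\W_p(\psi))=\big((p^n-1)-(p-1)^n\big)p^{-n}\sum_{v_0\ge0}p^{-nv_0}=\frac{(p^n-1)-(p-1)^n}{p^n-1}=1-\frac{(p-1)^n}{p^n-1}.
\]
Equivalently, reduction modulo $p$ carries $\mu_p$ to the uniform measure on $\p^{n-1}(\mathbb{F}_p)$, and the answer is the proportion of its points with a vanishing coordinate.
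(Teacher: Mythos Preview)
Your proof is correct and follows essentially the same route as the paper: both prove the two inclusions via Proposition~\ref{ball prop} (working with a representative $\bx$ with $|\bx|_p=1$ and building rational approximants by lifting coordinates modulo $p$ and letting the divisible-by-$p$ coordinate grow), and both compute the measure by stratifying $\Z_p^n$ by the minimal valuation and summing a geometric series. Your construction for the inclusion $\supseteq$ is in fact slightly more careful than the paper's, since you impose $\gcd(m,g)=1$ to guarantee that $\by$ is genuinely visible, whereas the paper simply takes $y_i=q$ for all $q\in p\N$ and asserts $\by\in\Z^n_{\mathrm{vis}}$; the paper's version can be patched, but yours avoids the issue cleanly.
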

\begin{proof}
First suppose that $\by\in\Z^n_\mathrm{vis}$ has $\height (\by )=y_i$ and $p|y_i$. Then if $\bx\in\Z_p^n$ satisfies $|\bx-\by|_p\le p^{-1}$ we must have $|\bx|_p=1$ and $|x_i|_p\le p^{-1}.$ By Proposition \ref{ball prop} this shows that
\[\W_p(\psi)\subseteq\phi\left(\left\{\bx\in\Z_p^n:|\bx|_p=1\text{ and } \min_{1\le i\le n}|x_i|_p\le p^{-1}\right\}\right).\]
For the other inclusion suppose that $\bx\in\Z_p^n$ satisfies $|\bx|_p=1$ and $|x_i|_p\le p^{-1}$ for some $1\le i\le n$. For any $q\in\N$ with $p|q$ we can choose a point $\by\in\Z^n_\mathrm{vis}$ with $\height (\by)=q$ and $\phi(\bx)\in B(\phi(\by),p^{-1}),$ by requiring that $y_i=q$ and for $j\not=i$ that $y_j$ be the least non-negative representative for $x_j\mod p$. Since $q\in p\N$ is arbitrary it follows that $\phi(\bx)\in\W_p(\psi)$.

For the measure calculation we have that
\[\W_p(\psi)^c=\phi\left(\{\bx\in\Z_p^n:|x_1|_p=\cdots =|x_n|_p=1\}\right)\]
and therefore
\begin{align*}
\mu_p(\W_p(\psi))&=1-\beta_p^n\left(\phi^{-1}(\W_p(\psi)^c)\right)\\
&=1-\sum_{i=0}^\infty\beta_p^n(\{\bx\in\Z_p^n : |x_1|_p=\cdots =|x_n|_p=p^{-i}\})\\
&=1-\sum_{i=0}^\infty\left(\frac{p-1}{p^{i+1}}\right)^n\\
&=1-\frac{(p-1)^n}{p^n-1}.
\end{align*}
\end{proof}
In light of this example we will assume in all of what follows that $\psi (q)=0$ whenever $p|q$. In this case we have the following theorem.
\begin{theorem}\label{0-1law thm}
    For any choice of $\psi$ satisfying $\psi(q)=0$ whenever $p|q$ we have that $\mu_p(\W_p(\psi))=0$ or $1$.
\end{theorem}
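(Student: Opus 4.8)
The plan is to establish a zero-one law for $\W_p(\psi)$ by exploiting the homogeneity of projective space under a suitable group of measure-preserving transformations, in the spirit of Gallagher's ergodic argument for the classical Duffin-Schaeffer problem. First I would observe that the natural object to act on is $\phi(\{\bx \in \Z_p^n : |\bx|_p = 1\})$, which carries the full measure $\mu_p$, and that permuting coordinates and multiplying $\bx$ by units in $\Z_p^\times$ are measure-preserving on this set. More importantly, for $a \in \Z_p$ and a fixed pair of indices the elementary ``shear'' maps $x_j \mapsto x_j + a x_i$ (with $i \neq j$, all other coordinates fixed) descend to well-defined transformations of $\p^{n-1}(\Q_p)$ that preserve $\mu_p$, since on $\Z_p^n$ they are measure-preserving affine maps commuting with scalar multiplication. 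The subgroup $\Gamma$ generated by all such shears with $a$ ranging over $\Z_p$ acts on $\p^{n-1}(\Q_p)$, and I would want to show that $\W_p(\psi)$ is, up to a $\mu_p$-null set, invariant under $\Gamma$.

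The invariance step is where the hypothesis $\psi(q) = 0$ for $p \mid q$ does its work. Using Proposition~\ref{ball prop}, each ball $B_\delta(\phi(\by), p^{-k})$ with $\by \in \Z^n_{\mathrm{vis}}$, $\height(\by) = y_i = q$, $p \nmid q$, has the concrete description $\phi(\{\bx \in \Z_p^n : |\bx - \by|_p \le p^{-k}\})$ after normalizing so that $|\by|_p = 1$. A shear by $a \in \Z_p$ carries such a ball to $\phi(\{\bx : |\bx - \by'|_p \le p^{-k}\})$ where $\by'$ is the image of $\by$ under the shear; and here I need that $\by'$ can be replaced by an honest visible integer vector of the same height $q$ whose reduction mod $p^k$ matches. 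Since $p \nmid q = y_i$, the index $i$ is unaffected by reduction, and one can find $\by'' \in \Z^n_{\mathrm{vis}}$ with $y_i'' = q$ and $\by'' \equiv (\text{rounding of } \by') \pmod{p^k}$ — exactly the kind of lifting already used in the Proposition preceding the theorem. Thus $\Gamma$ permutes the sets $A_{q,i}(\psi)$ (for fixed $q$, up to relabelling the constituent $\by$'s) and hence fixes each $\W_{p,i}(\psi)$, and therefore $\W_p(\psi) = \bigcup_i \W_{p,i}(\psi)$, exactly — no null set is even lost.

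Given exact $\Gamma$-invariance, it remains to show that $\Gamma$ acts ergodically (equivalently, that every $\Gamma$-invariant measurable set has measure $0$ or $1$). For this I would pass to $\phi^{-1}$ of the set inside $\{\bx \in \Z_p^n : |\bx|_p = 1\}$ and check that the group generated by the shears and unit scalings acts ergodically there with respect to $\beta_p^n$; this is essentially the statement that $\mathrm{SL}_n(\Z_p)$ (which is topologically generated by elementary shears) acts ergodically on $\mathbb{P}^{n-1}(\Q_p)$ with the measure $\mu_p$, since $\mu_p$ is the unique $\mathrm{SL}_n(\Z_p)$-invariant probability measure and the action is transitive on the relevant reductions mod $p^k$. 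The cleanest route is a density-point / Lebesgue-differentiation argument: a $\Gamma$-invariant set $W$ has, at $\mu_p$-almost every density point, density $1$ in arbitrarily small balls $B_\delta(\phi(\by), p^{-k})$; but any two such balls of the same radius are carried onto each other by an element of $\Gamma$ (again using the explicit ball description and coordinate-permutation to move the distinguished index), so $W$ has density $1$ in every small ball, forcing $\mu_p(W) \in \{0,1\}$.

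The main obstacle is the ergodicity / transitivity claim in the last paragraph: one must verify carefully that the shear subgroup really acts transitively (mod $p^k$, for every $k$) on the ``visible'' residue classes that index the balls, and that this transitivity plus invariance genuinely yields the density dichotomy. The coordinate bookkeeping — tracking which index $i$ realizes the max and ensuring $p \nmid q$ is preserved so that the lifting to $\Z^n_{\mathrm{vis}}$ goes through — is routine but must be done with care; everything else (measure-preservation of shears, the reduction to $|\bx|_p = 1$ via Proposition~\ref{ball prop}) is immediate from what precedes the statement.
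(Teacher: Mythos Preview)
Your strategy---find a group of $\mu_p$-preserving transformations that fixes $\W_p(\psi)$ and acts ergodically---is natural, and your ergodicity step (transitivity of $\SL_n(\Z_p)$ on balls of a given radius, combined with Lebesgue density) is fine. The difficulty is the invariance step, and the justification you give does not go through.

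You assert that the shear group $\Gamma$ permutes the balls making up each $A_{q,i}(\psi)$, appealing to a lifting: given $T_a\by$ with $a\in\Z_p$, find $\by''\in\Z^n_{\mathrm{vis}}$ with $y_i''=q$, $\height(\by'')=q$, and $\by''\equiv T_a\by\pmod{p^k}$. But this asks for an integer $y_j''\in[-q,q]$ in a prescribed residue class modulo $p^k$, which need not exist once $p^k>2q+1$, i.e.\ once $\psi(q)<(2q+1)^{-1}$. In the earlier proposition you cite, the lifting works only because there $\psi(q)=p^{-1}$ and $p\mid q$, so $q\ge p=p^k$; nothing like that is available here. Since $a q$ runs over all of $\Z_p$ as $a$ does (because $p\nmid q$), for most shears the image ball acquires a centre with no height-$q$ rational representative, so $T_aA_{q,i}(\psi)\not\subseteq A_{q,i}(\psi)$ in general, and there is no reason it should land in any $A_{q',i'}(\psi)$ either (nothing controls $\psi(q')$). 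The same obstruction persists at the level of the limsup, so the claim that $\W_p(\psi)$ is exactly $\Gamma$-invariant is unsupported.

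The paper avoids this problem by not working with a group action on $\p^{n-1}(\Q_p)$ at all. Instead it uses the affine charts $\eta_i:\Z_p^{n-1}\to E_i$ together with Proposition~\ref{eta measure prop} to identify $\W_{p,n}(\psi)$ with a classical $p$-adic simultaneous-approximation limsup set in $\Z_p^{n-1}$ (points $\bx$ with $\max_j|x_j-y_j/q|_p\le\psi(q)$ infinitely often). For such sets a zero--one law is already known (\cite[Lemma 1]{Haynes}, whose proof uses different transformations adapted to the rational structure, not $\Z_p$-shears). Symmetry between the $E_i$ and the covering relation $\W_p(\psi)^c=\bigcup_i(\W_{p,i}^c\cap E_i)$ then finish the argument. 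If you want to salvage a direct group-theoretic proof, the transformations must respect both the $p$-adic metric \emph{and} the height filtration on $\p^{n-1}(\Q)$; the full $\SL_n(\Z_p)$ does only the former.
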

In our proof we will reduce the problem to a problem about limsup sets in $\Z_p^{n-1}$ by using the bijective maps $\eta_i:\Z_p^{n-1}\rar E_i, ~1\le i\le n,$ defined by \[\eta_i(x_1,\ldots ,x_{n-1})=(x_1,\ldots ,x_{i-1},1,x_i,\ldots ,x_{n-1}).\] This will position us to apply a known zero-one law from \cite{Haynes}, from which Theorem \ref{0-1law thm} will follow from the following proposition.
\begin{Prop}\label{eta measure prop}
If $M\subseteq\Z_p^{n-1}$ is measurable then for $1\le i\le n$,
\[\mu_p\left(\eta_i(M)\right)=\frac{p^n-p^{n-1}}{p^n-1}\cdot\beta_p^{n-1}(M).\]
\end{Prop}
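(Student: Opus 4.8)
The plan is to compute both sides of the claimed identity directly by unwinding the definition of $\mu_p$ and tracking what the embedding $\eta_i$ does to the measure $\beta_p^n$ on $\Z_p^n$. Recall that for a measurable $N\subseteq\p^{n-1}(\Q_p)$ we have $\mu_p(N)=\beta_p^n(\phi^{-1}(N)\cap B(\mathbf 0,1))/\beta_p^n(B(\mathbf 0,1))$, and since $p=v$ is a finite place $B(\mathbf 0,1)=\Z_p^n$ and $\beta_p^n(\Z_p^n)=1$ (the different of $\Q$ is trivial, so the normalization gives full mass to $\Z_p^n$). Hence $\mu_p(N)=\beta_p^n(\phi^{-1}(N))$ for $N\subseteq\p^{n-1}(\Q_p)$ with representatives chosen in $\Z_p^n$; more precisely $\mu_p(N)=\beta_p^n\big(\phi^{-1}(N)\cap\Z_p^n\big)$. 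So the task reduces to identifying $\phi^{-1}(\eta_i(M))\cap\Z_p^n$ as a subset of $\Z_p^n$ and computing its $\beta_p^n$-measure.

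First I would describe $\phi^{-1}(\eta_i(M))\cap\Z_p^n$ explicitly. By definition $\eta_i(M)\subseteq E_i$, and a point of $E_i$ has a representative with $i$-th coordinate a unit; scaling by the inverse of that unit, $\phi^{-1}(\eta_i(M))\cap\Z_p^n$ is the set of $\bx\in\Z_p^n$ such that, writing $u$ for the $i$-th coordinate $x_i$, either $u\in\Z_p^\times$ and $u^{-1}(x_1,\dots,\widehat{x_i},\dots,x_n)\in M$, or $u$ is not a unit — but in the latter case $\phi(\bx)$ still lies in $E_i$ only if some other coordinate has equal or larger valuation, so I must be careful. The clean way is to decompose $\Z_p^n=\bigsqcup_{j\ge 0}\{\bx: |\bx|_p=p^{-j}\}$ and, on the shell $|\bx|_p=1$, further split according to whether $x_i$ is a unit. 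The key point: $\phi$ restricted to $\{\bx\in\Z_p^n:|\bx|_p=|x_i|_p=1\}$ factors through the map $\bx\mapsto x_i^{-1}\bx$, whose image is exactly $\{\bx:|\bx|_p=1,\ x_i=1\}$, and this latter set is the image of $\eta_i$. Concretely, $\{\bx\in\Z_p^n:|\bx|_p=|x_i|_p=1\}$ is $\Z_p^\times\times$ (the graph-type set) and the fibre over $\eta_i(\mathbf y)$ inside it is $\{(u y_1,\dots, u y_{i-1}, u, u y_i,\dots): u\in\Z_p^\times\}$. Therefore
\[
\phi^{-1}(\eta_i(M))\cap\{\bx\in\Z_p^n:|\bx|_p=|x_i|_p=1\}
=\{(u x_1,\dots,u x_{i-1},u,u x_i,\dots,u x_{n-1}): u\in\Z_p^\times,\ (x_1,\dots,x_{n-1})\in M\}.
\]
Its $\beta_p^n$-measure, by the change-of-variables $(\u,\mathbf x)\mapsto (u x_1,\dots,u,\dots)$ whose Jacobian is $u^{n-1}$ (with $|u|_p=1$, so $\beta_p$-measure-preserving up to the $\Z_p^\times$ integration), equals $\beta_p(\Z_p^\times)\cdot\beta_p^{n-1}(M)=(1-1/p)\,\beta_p^{n-1}(M)$.

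The remaining step is the normalization: I must check that the total measure of $E_i$ inside $\Z_p^n$ (equivalently $\mu_p(E_i)$, the case $M=\Z_p^{n-1}$) is $(p^n-p^{n-1})/(p^n-1)$, so that the formula is consistent, but actually the above computation already gives $\mu_p(\eta_i(M))=(1-1/p)\beta_p^{n-1}(M)$ for $M$ of this graph type only when the representative has $|x_i|_p=1$ — whereas $\eta_i(M)\subseteq E_i$ may also be hit by $\bx\in\Z_p^n$ with $|\bx|_p=p^{-j}<1$. Those contribute the scaled-down shells: on $|\bx|_p=p^{-j}$, writing $\bx=p^j\bx'$ with $|\bx'|_p=1$, we land in $\phi^{-1}(\eta_i(M))$ under the same condition on $\bx'$, and $\beta_p^n$ of that shell-piece is $p^{-jn}(1-1/p)\beta_p^{n-1}(M)$. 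Summing the geometric series $\sum_{j\ge0}p^{-jn}=p^n/(p^n-1)$ gives
\[
\mu_p(\eta_i(M))=(1-1/p)\cdot\frac{p^n}{p^n-1}\cdot\beta_p^{n-1}(M)=\frac{p^n-p^{n-1}}{p^n-1}\,\beta_p^{n-1}(M),
\]
as claimed. I would present this last summation and the change-of-variables computation as the technical core.

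The main obstacle I anticipate is bookkeeping rather than conceptual: making the decomposition of $\phi^{-1}(\eta_i(M))\cap\Z_p^n$ into the $\Z_p^\times$-orbits over each shell $|\bx|_p=p^{-j}$ airtight, in particular verifying that distinct $j$ give disjoint pieces, that within a shell the map $(u,\mathbf x)\mapsto$ the scaled vector is injective (it is, since the $i$-th coordinate recovers $u$ and then $\mathbf x$), and that the pushforward of $\beta_p^\times\times\beta_p^{n-1}$ is genuinely the restriction of $\beta_p^n$ — this uses that multiplication by a fixed unit preserves Haar measure on $\Z_p$ together with Fubini. Once the orbit decomposition and the geometric-series summation are in place, the identity drops out. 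Theorem~\ref{0-1law thm} then follows by transporting a known zero-one law in $\Z_p^{n-1}$ through the $\eta_i$ using this proportionality, together with the decomposition $\W_p(\psi)=\bigcup_i\W_{p,i}(\psi)$ and $\W_{p,i}(\psi)\subseteq E_i$.
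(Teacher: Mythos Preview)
Your approach is correct and essentially identical to the paper's: you decompose $\phi^{-1}(\eta_i(M))\cap\Z_p^n$ into the $\Z_p^\times$-orbit pieces $p^j\cdot\{u\cdot\eta_i(\mathbf x):u\in\Z_p^\times,\ \mathbf x\in M\}$ indexed by $j\ge 0$, compute the measure of the $j=0$ piece as $(1-1/p)\,\beta_p^{n-1}(M)$ via Fubini and unit-invariance of Haar measure, and then sum the geometric series $\sum_{j\ge 0}p^{-jn}$. The paper does exactly this (for $i=n$), writing the preimage as $\{z(\mathbf x,1):\mathbf x\in M,\ z\in\Z_p\}=\bigcup_{\ell\ge 0}p^\ell\cdot\{u(\mathbf x,1):\mathbf x\in M,\ u\in U_p\}$ and carrying out the same computation.
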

\begin{proof}
For notational convenience let us simply prove the case when $i=n$. Notice that
\begin{align*}
\phi^{-1}(\eta_n(M))\cap\Z_p^n&=\{z(\bx,1):\bx\in M, z\in\Z_p\}\\
&=\bigcup_{\ell=0}^\infty p^\ell\cdot\{u(\bx,1):\bx\in M, u\in U_p\},
\end{align*}
where $U_p$ denotes the group of units in $\Z_p$. Therefore we have
\begin{align*}
\mu_p\left(\eta_n(M)\right)&=\sum_{\ell=0}^\infty p^{-n\ell}\cdot\beta_p^n\left(\{u(\bx,1):\bx\in M, u\in U_p\}\right)\\
&=\sum_{\ell=0}^\infty p^{-n\ell}\int_{U_p}\int_{\Z_p^{n-1}}\chi_{u^{-1}M}(\bx)~d\beta_p^{n-1}(\bx)d\beta_p(u)\\
&=\sum_{\ell=0}^\infty p^{-n\ell}\int_{U_p}\beta_p^{n-1}(u^{-1}M)~d\beta_p(u).
\end{align*}
For any $u\in U_p$ we have that $\beta_p^{n-1}(u^{-1}M)=\beta_p^{n-1}(M)$, which gives
\begin{align*}
\mu_p\left(\eta_n(M)\right)&=\beta_p^{n-1}(M)\beta_p(U_p)\sum_{\ell=0}^\infty p^{-n\ell}\\
&=\frac{p^n-p^{n-1}}{p^n-1}\cdot\beta_p^{n-1}(M).
\end{align*}
\end{proof}
\begin{proof}[Proof of Theorem \ref{0-1law thm}]
Recall that for each $i$ we have $\W_{p,i}(\psi)\subseteq E_i$. It is also clear by the symmetry of the definitions that $\mu_p(\W_{p,i}(\psi))=\mu_p(\W_{p,j}(\psi))$ for all $i,j$. We will now show that, under the hypotheses of our theorem,
\begin{equation}\label{0-1law in Z_p}
\beta_p^{n-1}(\eta_n^{-1}(\W_{p,n}(\psi)))=0\text{ or }1.
\end{equation}
To see this, suppose that $0<\psi (q)<1$ and that $\by\in\Z^n_\mathrm{vis}$ has $\height (\by)=y_n=q$. Then it follows from Proposition \ref{ball prop} that
\begin{align}
&\eta_n^{-1}(B_\delta(\phi (\by),\psi (q)))\nonumber\\
&\qquad =\left\{\bx\in\Z_p^{n-1}:\left|x_i-\frac{y_i}{q}\right|_p\le\psi (q)\text{ for }1\le i\le n-1\right\}.\label{ball pullback eqn}
\end{align}
Therefore a point $\bx\in\Z_p^{n-1}$ belongs to $\eta_n^{-1}(W_{p,n}(\psi))$ if and only if
\[\max_{1\le i\le n-1}\left|x_i-\frac{y_i}{q}\right|_p\le\psi (q)\]
for infinitely many $q\in\N$ and $(y_1,\ldots ,y_{n-1})\in\Z^{n-1}$ satisfying $|y_i|\le q$ for each $i$ and $\gcd (y_1,\ldots ,y_{n-1},q)=1$. By a minor modification of the proof of \cite[Lemma 1]{Haynes} we conclude that (\ref{0-1law in Z_p}) holds.

Remark: To alleviate any doubt about this last sentence, the only modification necessary in using \cite[Lemma 1]{Haynes} is to justify that the proof of that lemma still works with the difference in our gcd conditions (i.e. in \cite{Haynes} it deals with the case where $\gcd(y_i,q)=1$ for all $i$). However the gcd condition only comes up in one place in the proof, and it causes no problems for our setup.

Finally we will combine (\ref{0-1law in Z_p}) with Proposition \ref{eta measure prop}. On one hand if $\beta_p^{n-1}(\eta_n^{-1}(\W_{p,n}(\psi)))=0$ then we have that $\mu_p(\W_{p,i}(\psi))=0$ for all $i$ and (\ref{limsup union eqn}) gives that $\mu_p(\W_p(\psi))=0$.

On the other hand if $\beta_p^{n-1}(\eta_n^{-1}(\W_{p,n}(\psi)))=1$ then for each $i$ we have that
\[\mu_p(\W_{p,i}(\psi))=\frac{p^n-p^{n-1}}{p^n-1}=\mu_p(E_i).\]
Then since
\begin{align*}
\W_p(\psi)^c=\bigcup_{i=1}^n\left(\W_{p,i}^c\cap E_i\right)
\end{align*}
it follows that
\[\mu_p\left(\W_p(\psi)^c\right)\le\sum_{i=1}^n\mu_p\left(\W_{p,i}^c\cap E_i\right)=0,\]
and $\mu_p(\W_p(\psi))=1.$
\end{proof}

\subsection{Proof of Theorem \ref{thm2}}\label{p-adic proof subsec}
We remind the reader that we are assuming that $n>2$ and that $\psi(q)$ takes values only in the set $\{0\}\cup\{p^{-k}:k\in\N\}$, with $\psi (q)=0$ whenever $p|q$. First we point out that the divergence of the sum (\ref{sum}) is equivalent to the divergence of
\begin{equation}\label{meas div eqn}
\sum_{q=1}^\infty\mu_p (A_{q,n}(\psi)).
\end{equation}
To see this note that if $\psi (q)\le q^{-1}$ then by Proposition \ref{ball prop} the right hand side of (\ref{A_q,i def}) is a disjoint union. Therefore in this case
\begin{align*}
\mu_p(A_{q,n}(\psi))=\mu_p\left( B_\delta({\bf 1},\psi (q))\right)\cdot\#\{\by\in\Z^n_{\mathrm{vis}}:\height (\by)=y_n=q\}
\end{align*}
Now it is easy to verify that
\[\mu_p\left( B_\delta({\bf 1},\psi (q))\right)=\frac{\psi(q)^{n-1}}{1-p^{-n}}\]
and
\begin{align*}
\#\{\by\in\Z^n_{\mathrm{vis}}:\height (\by)=y_n=q\}&=\sum_{\substack{\byp\in\Z^{n-1}\\ 0<\|\byp\|_\infty\le q}}\sum_{d|q,y_1,\ldots ,y_{n-1}}\mu (d)\\
&=\sum_{d|q}\mu (d)\sum_{\substack{\byp\in\Z^{n-1}\\ 0<\|\byp\|_\infty\le q/d}}1\\
&=(2q)^{n-1}\sum_{d|q}\frac{\mu (d)}{d^{n-1}}\\
&\asymp q^{n-1},
\end{align*}
where $\|\cdot\|_\infty$ denotes the sup norm. Thus we have
\[\mu_p(A_{q,n}(\psi))\asymp q^{n-1}\psi (q)^{n-1}\]
for all $q$ with $\psi (q)\le q^{-1},$ and it is clear from this that (\ref{sum}) diverges if and only if (\ref{meas div eqn}) does. Furthermore if we define $\psi'$ by
\[\psi'(q)=\begin{cases}\psi (q)&\text{ if }\psi (q)\le q^{-1},\\q^{-1}&\text{otherwise},\end{cases}\]
then the convergence or divergence of (\ref{sum}) is the same with $\psi$ replaced by $\psi'$. Since $\W_p (\psi')\subseteq \W_p (\psi)$, this shows that it is sufficient to prove Theorem \ref{thm2} under the additional hypothesis that $\psi (q)\le q^{-1}$ for all $q$. For simplicity we make this assumption for the rest of the proof.

Now suppose that $q,r\in\N$ are distinct and that $\psi(q)\le\psi (r)=p^{-k}$. Then we have the upper bound
\begin{align*}
\mu_p (A_{q,n}(\psi)\cap A_{r,n}(\psi))\le \mu_p\left( B_\delta({\bf 1},\psi (q))\right)\cdot S(q,r),
\end{align*}
with
\begin{align*}
S(q,r)=\#\{\bx,\by\in\Z^n_{\mathrm{vis}}:&~\height (\bx)=x_n=q, \height (\by)=y_n=r,\\
&\qquad\delta_p (\bx,\by)\le\psi (r)\}.
\end{align*}
By Proposition \ref{ball prop} we obtain
\begin{align*}
S(q,r)\le\sum_{\substack{\bxp\in\Z^{n-1}\\ 0<\|\bxp\|_\infty\le q}}\sum_{\substack{\byp\in\Z^{n-1}\\ 0<\|\byp\|_\infty\le r\\q\byp-r\bxp =0\mod p^k}}1,
\end{align*}
where we have used the fact that $\gcd(x_1,...,x_n)=\gcd(y_1,...,y_n)=1$ to exclude the terms in the sums corresponding to $\bxp$ or $\byp={\bf 0}$. This guarantees that the sums are empty whenever $p^k>2qr$, therefore we can safely say that
\[S(q,r)\ll\left(\frac{qr}{p^k}\right)^{n-1}.\]
Substituting this above gives
\begin{align*}
\mu_p (A_{q,n}\cap A_{r,n})\ll (\psi(q)\psi(r)qr)^{n-1}\ll\mu_p (A_{q,n})\mu_p(A_{r,n}).
\end{align*}
Finally by a standard variance argument (e.g. \cite[Lemma 2.3]{Harman}) we have under the divergence of (\ref{meas div eqn}) that
\begin{align*}
\mu_p(\W_{p,n})\ge\limsup_{Q\rar\infty}\left(\sum_{q=1}^Q\mu_p (A_{q,n})\right)^2\left(\sum_{q,r=1}^Q\mu_p (A_{q,n}\cap A_{r,n})\right)^{-1}>0,
\end{align*}
and by Theorem \ref{0-1law thm} it follows that $\mu_p(\W_p(\psi))=1$.

\section{Projective Khintchine Theorem}\label{ubiquity}

\noindent In the proof of Theorem \ref{thm1}, we will use the notion of \emph{ubiquitous systems}. Ubiquitous systems are a modern avatar of \emph{regular systems} which originated in the work of A. Baker and W. M. Schmidt \cite{BS}. Subsequently, they were developed by Dodson, Rynne and Vickers \cite{DRV} and others and have proved to be a valuable tool in investigating problems in metric Diophantine approximation. We refer the reader to the work of Berenevich, Dickinson, and Velani \cite{BDV} for a very readable account of the history,  a modern, improved version which we will use, as well as a wealth of applications. We begin with the definition of a ubiquitous system, following \S 2 of \cite{BDV} and using their notation. Let $(\Omega, d)$ be a compact metric space equipped with a probability measure $m$ on the corresponding Borel $\sigma-$algebra.  Let $\cR = \{R_{\alpha}~:~\alpha \in J\}$ be a family of subsets $R_{\alpha}$ (called resonant sets) of $\Omega$ indexed by an infinite countable set $J$. Let $\beta : J \to \R_{+}$ and for $\delta > 0$ and $A \subset \Omega$ define $\Delta(A, \delta)$ to be the $\delta$ neighborhood of $A$,
$$\Delta(A, \delta) := \{x \in \Omega~:~d(x, A) < \delta\}.$$
\noindent Let $\rho : \R_{+} \to \R_{+}$ denote a function such that $\lim_{r \to \infty} \rho(r) = 0$, and let $l = \{l_n\}$ and $u = \{u_n\}$ be positive increasing sequences such that $l_n < u_n$ for all $n$ and $\lim_{n \to \infty}l_n = \lim_{n \to \infty} u_n = \infty$. Define
$$ \Delta^{u}_{l}(\rho, n) := \bigcup_{\alpha \in J^{u}_{l}(n)} \Delta(R_{\alpha}, \rho(\beta_{\alpha})),$$
where
$$J^{u}_{l} := \{\alpha \in J~:~l_n < \beta_{\alpha} \leq u_n \}. $$

\noindent We assume that the cardinality of $J^{u}_{l}(n)$ is finite for every $n$ and denote by $\Lambda(\rho)$ the set $\limsup_{n \to \infty} \Delta^{u}_{l}(\rho, n)$.

We say that $(\cR, \beta)$ is locally $m$-ubiquitous relative to $(\rho, l, u)$ if both of the following conditions are satisfied:
\begin{enumerate}
\item There exist $r_0, \kappa > 0$ such that for any $r \leq r_0$ and any ball $B = B(x, r)$,
\begin{equation}
m(B \cap \Delta_{l}^{u}(\rho, n)) \geq \kappa m(B)~\text{for}~n \geq n_{0}(B).
\end{equation}
\item There exists constants $0 \leq \gamma \leq \dim \Omega$ and $0 < c_1 < 1 < c_2$, such that for any $\alpha \in J$ with $\beta_{\alpha} \leq u_n, c \in R_{\alpha},$ $0 < \lambda \leq \rho(u_n)$ and large enough $n$:
\begin{enumerate}
\item $m(B(c, \frac{1}{2}\rho(u_n)) \cap \Delta(R_{\alpha}, \lambda)) \geq c_1m(B(c, \lambda))\left(\frac{\rho(u_n)}{\lambda}\right)^{\gamma}.$\\
\item $m(B \cap B(c, 3\rho(u_n)) \cap \Delta(R_{\alpha}, 3\lambda)) \leq c_2 m(B(c, \lambda))\left(\frac{r(B)}{\lambda}\right)^{\gamma},$
\end{enumerate}
\noindent where $r(B)$ denotes the radius of $B$.
\end{enumerate}
We will refer to the conditions in (2) of this definition as the \emph{intersection conditions}. We say that $(\cR, \beta)$ is globally $m$-ubiquitous relative to $(\rho, l, u)$ if both of the above conditions are satisfied with $B = \Omega$.

We need two more notions from \cite{BDV}. A measure $m$ on $\Omega$ is said to satisfy condition $(M2)$ if there exist positive constants $\delta, r_0, a, b$ such that for any $x \in \Omega$ and $r \leq r_0$
$$ ar^{\delta} \leq m(B(x, r)) \leq br^\delta. $$

\noindent A function $f$ is called $u$-regular for a sequence $u$ as above, if there exists a positive constant $\lambda < 1$ for which
$$ f(u_{n+1}) \leq \lambda f(u_n).$$
The following is \cite[Corollary 2]{BDV}.
\begin{theorem}\label{BDV1}
Let $\Omega$ be a compact metric space equipped with a probability measure $m$ satisfying condition $(M2)$. Suppose that $(\cR, \beta)$ is a globally $m$-ubiquitous system relative to $(\rho, l, u)$ and that $\psi$ is a decreasing function. Assume that $\delta > \gamma$, that either $\psi$ or $\rho$ is $u$-regular and that
\begin{equation}\label{BDV1-1}
\sum_{n = 1}^{\infty} \left(\frac{\psi(u_n)}{\rho(u_n)}\right)^{\delta - \gamma} = \infty.
\end{equation}
\noindent Then $m(\Lambda(\psi)) > 0$. In addition, if any open subset of $\Omega$ is measurable and $(\cR, \beta)$ is locally $m$-ubiquitous relative to $(\rho, l, u)$, then $m(\Lambda(\psi)) = 1$.
\end{theorem}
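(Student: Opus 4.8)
This is \cite[Corollary 2]{BDV} and may be invoked directly; for orientation we sketch the underlying ubiquity argument. Write $E_n:=\Delta^{u}_{l}(\psi,n)=\bigcup_{\alpha\in J^{u}_{l}(n)}\Delta(R_\alpha,\psi(\beta_\alpha))$, so $\Lambda(\psi)=\limsup_{n\to\infty}E_n$. The plan is to make the two hypotheses cooperate: global (resp.\ local) $m$-ubiquity says the \emph{large} neighbourhoods $\Delta^{u}_{l}(\rho,n)$ already cover a fixed proportion $\kappa$ of $\Omega$ (resp.\ of every ball $B$ with $r(B)\le r_0$) once $n$ is large, while the intersection conditions let one trade these down to the smaller $\psi$-neighbourhoods of the resonant sets at a cost accounted for exactly by \eqref{BDV1-1}. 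One may assume $\psi(u_n)\le\rho(u_n)$ for all large $n$, the opposite case being easier.

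\textbf{Step 1: one level fills a definite proportion of every ball.} Fix $B$ with $r(B)\le r_0$ and $n$ large, so $m\!\left(B\cap\Delta^{u}_{l}(\rho,n)\right)\ge\kappa\,m(B)$ by local ubiquity. Cover $B\cap\Delta^{u}_{l}(\rho,n)$ with bounded overlap by balls $B(c_j,\tfrac{1}{2}\rho(u_n))$ centred at points $c_j\in R_{\alpha_j}$ with $\alpha_j\in J^{u}_{l}(n)$; here $u$-regularity of $\rho$ (or $\psi$) is used to keep all radii $\rho(\beta_{\alpha_j})$ occurring at level $n$ comparable to $\rho(u_n)$, so that such a cover exists. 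On each such ball apply the lower intersection bound in condition (2) with $\lambda=\psi(u_n)$ --- legitimate because $\psi$ is decreasing, $\beta_{\alpha_j}\le u_n$ and $\psi(u_n)\le\rho(u_n)$, and the resulting $\psi(u_n)$-neighbourhood sits inside the $\psi(\beta_{\alpha_j})$-neighbourhood appearing in $E_n$ --- to get $m\!\left(B(c_j,\tfrac{1}{2}\rho(u_n))\cap\Delta(R_{\alpha_j},\psi(u_n))\right)\ge c_1\,m\!\left(B(c_j,\psi(u_n))\right)\left(\rho(u_n)/\psi(u_n)\right)^{\gamma}$. Using $(M2)$ to replace $m(B(c_j,t))$ by a quantity $\asymp t^{\delta}$, the right-hand side is $\asymp\left(\psi(u_n)/\rho(u_n)\right)^{\delta-\gamma}m\!\left(B(c_j,\rho(u_n))\right)$; summing over $j$ and inserting the lower bound from local ubiquity gives $m(B\cap E_n)\gg\left(\psi(u_n)/\rho(u_n)\right)^{\delta-\gamma}m(B)$ for all large $n$, with implied constant depending only on $\kappa$, $c_1$ and the $(M2)$-constants.

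\textbf{Step 2: divergence and quasi-independence on average.} Taking $B=\Omega$ in Step 1 and summing, \eqref{BDV1-1} forces $\sum_n m(E_n)=\infty$. One then establishes $\sum_{n,k\le N}m(E_n\cap E_k)\ll\left(\sum_{n\le N}m(E_n)\right)^2$: for $n\ne k$, estimate the measure of a $\psi$-neighbourhood of a resonant set at level $\min(n,k)$ intersected with one at level $\max(n,k)$ using the upper intersection bound in condition (2), while $u$-regularity bounds the number of levels at any one scale, so the double sum telescopes against the square of the single sum; the same holds with any ball $B$ in place of $\Omega$. This is the technical heart and the step I expect to be the main obstacle, since it is where the geometry of the resonant sets, the gap sequences $l,u$, and the regularity hypothesis must all be controlled at once.

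\textbf{Step 3: from positive to full measure.} Steps 1 and 2 feed into a standard second-moment (variance) inequality --- the tool used in the proof of Theorem~\ref{thm2} via \cite[Lemma 2.3]{Harman} --- giving $m(\Lambda(\psi))\ge\limsup_{N\to\infty}\left(\sum_{n\le N}m(E_n)\right)^2\left(\sum_{n,k\le N}m(E_n\cap E_k)\right)^{-1}>0$, which is the first assertion. For the second, rerun Steps 1 and 2 with an arbitrary ball $B$ in place of $\Omega$ --- precisely what \emph{local} ubiquity supplies --- to obtain $m(B\cap\Lambda(\psi))\ge c\,m(B)$ with $c>0$ independent of $B$; since open subsets are measurable and $m$ satisfies $(M2)$, the Lebesgue density theorem for $m$ forces $m(\Lambda(\psi))=1$. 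In the eventual application (Theorem~\ref{thm1}) one takes $\Omega=\p^{n-1}(\Q_p)$ for finite $p$, or $\p^{n-1}(\R)$ for the infinite place, $m=\mu_p$, $\gamma=0$, $\delta=n-1$, the resonant sets the points of $\p^{n-1}(\Q)$ weighted by their heights, and $\rho(r)\asymp r^{-n/(n-1)}$; the required local $\mu_p$-ubiquity then follows from Theorem~\ref{Choi-Vaaler} together with known results on the distribution of rational points of bounded height in $\p^{n-1}$.
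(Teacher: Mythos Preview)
The paper does not prove Theorem~\ref{BDV1} at all: it is quoted verbatim as \cite[Corollary 2]{BDV} and invoked as a black box, with no argument given. You correctly identify this in your first sentence, so in that sense your proposal matches the paper exactly; the remainder of your sketch is an optional gloss on the BDV machinery that the paper itself does not attempt. Your outline of the three-step ubiquity argument (local covering via the intersection conditions, quasi-independence to feed the variance lemma, then a density argument to upgrade positive to full measure) is broadly faithful to how \cite{BDV} proceeds, though a couple of details are loose---for instance, the bounded-overlap cover in Step~1 requires a Vitali-type selection rather than being automatic, and $u$-regularity is used not so much to make the radii at a single level comparable as to control the geometric decay needed when summing across levels in Step~2. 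Since the paper treats this as a citation, none of this affects the correctness of invoking the result.
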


\noindent Now suppose that $p$ is a finite or infinite prime and let $\Omega = \p^{n-1}(\Q_p)$ and $J = \p^{n-1}(\Q)$. For $\alpha\in J$ let $\beta_{\alpha} := \height(\alpha)$ and $R_{\alpha} = \alpha$, so the resonant sets are rational points. Take $l_{i + 1} = u_i = 2^{i}$ and $\rho(r) = r^{-n/n-1}$. Let $I$ be a ball in $\p^{n-1}(\Q_p)$. Note that since the function $\psi$ in Theorem \ref{thm1} is decreasing, it is $u$-regular. Moreover the choice of $\rho$ means that if we take $\gamma = 0$ and $\delta = n-1$ then (\ref{BDV1-1}) coincides with (\ref{sum}). Therefore to prove Theorem \ref{thm1} it is sufficient to establish the following proposition.
\begin{Prop}\label{ubiquitous}
The system $(\mathcal{R}, \beta)$ is locally $\mu_p$-ubiquitous with respect to $(\rho, l, u)$.
\end{Prop}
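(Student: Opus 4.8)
The plan is to verify the two defining conditions of local $\mu_p$-ubiquity directly, using the Choi--Vaaler counting/distribution results for rational points of bounded height together with the explicit ball description (Proposition \ref{ball prop} in the $p$-adic case, and a standard Euclidean one for $v\mid\infty$). Throughout, the measure $\mu_p$ on $\Omega=\p^{n-1}(\Q_p)$ satisfies condition $(M2)$ with $\delta=n-1$: in the finite-place case this follows from Proposition \ref{ball prop}, since a $\delta$-ball of radius $p^{-k}$ pulls back to a translate of $(p^k\Z_p)^{n-1}\times U_p$ inside $\{|\bx|_p=1\}$ and hence has $\mu_p$-measure comparable to $p^{-k(n-1)}$; in the infinite case it is the statement that $\delta_\infty$-balls are comparable to Euclidean balls on the sphere. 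Since $\gamma=0$, the intersection conditions (2)(a) and (2)(b) degenerate: $\Delta(R_\alpha,\lambda)$ is just the $\lambda$-ball $B(\alpha,\lambda)$ around the single rational point $\alpha$, and with $\gamma=0$ the inequalities reduce to $\mu_p(B(\alpha,\tfrac12\rho(u_n)))\geq c_1\mu_p(B(\alpha,\lambda))$ — automatic from $(M2)$ once $\lambda\leq\rho(u_n)$ up to adjusting constants — and $\mu_p(B\cap B(\alpha,3\rho(u_n))\cap B(\alpha,3\lambda))\leq c_2\mu_p(B(\alpha,\lambda))$, again immediate from $(M2)$. So (2) costs nothing; the entire content is condition (1).

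For condition (1) I would argue as follows. Fix a ball $B=B(x,r)$ with $r\leq r_0$. By the corollary to the Choi--Vaaler theorem (Theorem \ref{Choi-Vaaler}) applied with parameter $\tau$ chosen so that $c_k(n)|\tau|_v^{-1}\height(\by)^{-1}$ is of the right size, every point of $\Omega$ — in particular every point of $B$ — lies within $\delta_v$-distance $c(n)T^{-n/(n-1)}$ of some rational point of height at most $c(n)T^{n-1}$, for every sufficiently large $T$. Taking $T\in(l_n,u_n]$ with $u_n=2^n$, this says precisely that $B\subseteq\Delta_l^u(\rho,n)$ up to multiplicative constants in the radii; absorbing those constants (and noting $\rho(u_n)=u_n^{-n/(n-1)}$ decreases geometrically, so neighboring scales are comparable) gives $B\subseteq\Delta_l^u(c\,\rho,n)$ for a fixed constant $c$, which after rescaling $\rho$ once and for all yields $\mu_p(B\cap\Delta_l^u(\rho,n))=\mu_p(B)$ for $n\geq n_0(B)$, hence (1) with $\kappa=1$. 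The one genuine subtlety is that Theorem \ref{Choi-Vaaler} as stated guarantees an approximation by a rational point but does not by itself control which dyadic window $(l_n,u_n]$ the height of that point falls into; I would handle this by a pigeonhole/telescoping argument over the scales $l_n<\height(\by)\leq u_n$, or more cleanly by invoking the known distribution result for rational points of bounded height in $\p^{n-1}$ (the asymptotic count $\asymp T^n$ for $\height\leq T$, which is implicit in the convergence/divergence discussion around (\ref{sum}) and in the computation of $\#\{\by\in\Z^n_{\mathrm{vis}}:\height(\by)=q\}\asymp q^{n-1}$ in \S\ref{p-adic proof subsec}) to guarantee that each dyadic block contributes enough rational points to cover a fixed proportion of any ball.

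The main obstacle is exactly this covering step for condition (1): one must show not merely that rational points are dense but that those with height in a single window $(2^{n-1},2^n]$ already $\rho$-cover a positive proportion of every small ball. For the finite place this is essentially combinatorial — count pairs $(\by, q)$ with $q=\height(\by)$ in the window and the associated $\delta$-balls of radius $\asymp q^{-n/(n-1)}$, using Proposition \ref{ball prop} to see they are disjoint (since $\psi(q)\le q^{-1}$-type spacing holds at these radii) and summing their measures via the $\asymp q^{n-1}$ count to get a total $\asymp 1$, then a standard quasi-independence/variance estimate (as in the proof of Theorem \ref{thm2}) to convert total measure into uniform density on balls. For the infinite place one replaces Proposition \ref{ball prop} by the comparison of $\delta_\infty$ with the chordal metric and uses Gallagher-type or lattice-point estimates for rational points on the sphere; I would cite the relevant equidistribution statement rather than reprove it. Once condition (1) is in hand, Proposition \ref{ubiquitous} follows, and feeding it into Theorem \ref{BDV1} with $\gamma=0$, $\delta=n-1$ completes the proof of Theorem \ref{thm1}.
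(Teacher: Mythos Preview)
Your framework is correct: $(M2)$ holds with $\delta=n-1$, the intersection conditions with $\gamma=0$ are immediate, and the only content is the local covering condition (1). You also correctly flag the key subtlety, that Choi--Vaaler alone does not force the approximating rational into the prescribed height window.

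Where you diverge from the paper is in resolving that subtlety. You mention a ``pigeonhole/telescoping'' option in passing but then develop a different route: lower-bound $\mu_p\bigl(\Delta_l^u(\rho,i)\cap B\bigr)$ directly by summing ball measures over rationals in the window and applying a second-moment/quasi-independence estimate to pass from total measure to measure of the union. This can be pushed through at a finite place, but it imports the overlap bound $\mu_p(A_{q,n}\cap A_{r,n})\ll\mu_p(A_{q,n})\mu_p(A_{r,n})$ from \S\ref{p-adic proof subsec}, which is precisely the substantive step in the proof of Theorem~\ref{thm2}; at the infinite place you leave the analogue as an unspecified citation. So your argument for condition (1) is circular in the sense that it leans on the Duffin--Schaeffer machinery that Theorem~\ref{thm1} is meant to bypass, and it is not uniform in the place.

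The paper instead uses the standard \emph{subtraction} trick and avoids overlap estimates entirely. By Choi--Vaaler with $Q=u_i$, every point of $I$ lies in $B\bigl(\by,(u_i\height(\by))^{-1}\bigr)$ for some rational $\by$ with $\height(\by)\le u_i^{\,n-1}$. One then uses only the \emph{upper bound} in the Schanuel--Choi count \eqref{count}, summed over dyadic height ranges up to $u_i^{\,n-1}/2$, to show that the balls around rationals of height at most $u_i^{\,n-1}/2$ cover $\mu_p$-measure at most $(1-2^{-n})\mu_p(I)$. The residual proportion $2^{-n}\mu_p(I)$ is therefore covered by rationals in the top window, giving condition (1) with $\kappa=2^{-n}$. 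This works verbatim at finite and infinite places and needs neither disjointness nor any pairwise-intersection control; it is both shorter and more robust than the variance route you sketch.
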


\begin{proof}
First note that the intersection conditions are satisfied with $\gamma = 0$. By Theorem \ref{Choi-Vaaler} after rescaling the metric appropriately, we have that for every $\bx \in I$ and $Q \geq 1$ there exists $\by \in \p^{n-1}(\Q)$ such that
\begin{equation}\label{ubi1}
\delta_{p}(\bx, \by) \leq \frac{1}{Q\height(\by)}
\end{equation}

\noindent and

\begin{equation}\label{ubi2}
\height(\by) \leq Q^{n-1}.
\end{equation}

\noindent We set $Q = u_i$ and use the fact that the measure of a ball of radius $r$ in $\p^{n-1}(\Q_p)$ is $\asymp r^{(n-1)}$. We will also need results of Schanuel (\cite{Schanuel}) and Choi (\cite{Choi}) on the distribution of rational points. For every $T > 0$, there are finitely many $\by \in I \cap \p^{n-1}(\Q)$ with $\height(\by) \leq T$ and it an important problem in arithmetic geometry to understand the behaviour of the counting function as $T \to \infty$. From loc. cit. we have that
\begin{equation}\label{count}
\#\{\by \in I \cap \p^{n-1}(\Q) ~:~\height(\by) \leq T\} \asymp T^{n}\mu_{p}(I)
\end{equation}

\noindent We can now calculate
\begin{align*}
&\mu_{p}\left(I \cap \bigcup_{\substack{\by \in I\\
\height(\by) \leq 2^{i(n-1)-1}}} B\left(\by, \frac{1}{\height(\by)2^{i}}\right) \right)\\
&\qquad\qquad\leq \sum_{\substack{\by \in I\\
\height(\by) \leq 2^{i(n-1)-1}}} \left(\frac{1}{\height(\by)2^{i}}\right)^{(n-1)}\\
&\qquad\qquad\leq \frac{1}{2^{i(n-1)}} \sum_{\substack{\by \in I\\
\height(\by) \leq  2^{i(n-1)-1}}}\frac{1}{\height(\by)^{(n-1)}}\\
&\qquad\qquad\leq \frac{\mu_{p}(I)}{2^{i(n-1)}} \sum_{j = 1}^{i(n-1)-1}\left(\frac{1}{2^j}\right)^{n-1}2^{jn}\left(1 - \frac{1}{2^n}\right)\\
&\qquad\qquad\leq \frac{\mu_{p}(I)}{2^{i(n-1)}}\left(1 - \frac{1}{2^n}\right)  \sum_{j = 1}^{i(n-1)-1}2^j\\
&\qquad\qquad\leq \left(1 - \frac{1}{2^{n}}\right)\frac{\mu_{p}(I)}{2^{i(n-1)}}2(2^{i(n-1)-1} - 1)\\
&\qquad\qquad\leq \left(1 - \frac{1}{2^{n}} \right)\mu_{p}(I).
\end{align*}

\noindent We therefore have that
$$\mu_{p}\left(I \cap \bigcup_{\substack{\by \in I\\
2^{i(n-1)-1} < \height(\by) \leq 2^{i(n-1)}}} B\left(\by, \frac{1}{2^{2i}}\right) \right) \geq \mu_{p}(I) - \left(1 - \frac{1}{2^{n}} \right)\mu_{p}(I) \geq \frac{\mu_{p}(I)}{2^{n}}$$
\noindent thereby completing the proof of the proposition.
\end{proof}

\end{document}